\newcommand{\C}{\mathcal C}
\newcommand{\F}{\mathcal F}
\newcommand{\U}{\mathcal U}
\newcommand{\V}{\mathcal V}
\newcommand{\St}{\mathcal{S}t}
\newcommand{\e}{\varepsilon}
\newcommand{\Ra}{\Rightarrow}
\newcommand{\IN}{\mathbb N}
\newtheorem{problem}{Problem}
\newtheorem{theorem}{Theorem}
\newtheorem{claim}{Claim}
\title{A parallel metrization theorem}
\author{Taras Banakh and Olena Hryniv}
\address{Jan Kochanowski University in Kielce (Poland) and Ivan Franko National University of Lviv (Ukraine)}
\email{t.o.banakh@gmail.com, ohryniv@gmail.com}
\keywords{Metrization, parallel sets, metric space}
\subjclass{54E35}
\begin{document}
\begin{abstract} Two non-empty sets $A,B$ of a metric space $(X,d)$ are called  parallel if $d(a,B)=d(A,B)=d(A,b)$ for any points $a\in A$ and $b\in B$. Answering a question posed on {\tt mathoverflow.net}, we prove that for a cover $\C$ of a metrizable space $X$ the following conditions are equivalent: (i) the topology of $X$ is generated by a metric $d$ such that any two sets $A,B\in\C$ are parallel; (ii) the cover $\C$ is disjoint, lower semicontinuous and upper semicontinuous.
\end{abstract}
\maketitle

In this paper we shall prove a ``parallel'' metrization theorem answering a question \cite{MO} of the Mathoverflow {\tt user116515}. The question concerns parallel sets in metric spaces.

Two non-empty sets $A,B$ in a metric space $(X,d)$ are called {\em parallel} if $$d(a,B)=d(A,B)=d(A,b)\mbox{ \ \ for any $a\in A$ and $b\in B$}.$$ Here $d(A,B)=\inf\{d(a,b):a\in A,\; b\in B\}$ and $d(x,B)=d(B,x):=d(\{x\},B)$ for $x\in X$.
Observe that two closed parallel sets $A,B$ is a metric space are either disjoint or coincide.

Let $\C$ be a family of non-empty closed subsets of a topological space $X$. A metric $d$ on $X$ is defined to be {\em $\C$-parallel} if any two sets $A,B\in\C$ are parallel with respect to the metric $d$.

A family $\C$ of subsets of $X$ is called a {\em compact cover} of $X$ if $X=\bigcup\C$ and each set $C\in\C$ is compact.

In this paper we shall consider the following problem posed on MathOverflow \cite{MO}.

\begin{problem}\label{prob1} For which compact covers $\C$ of a topological space $X$ the topology of $X$ is generated by a $\C$-parallel metric?
\end{problem}

A metric generating the topology of a given topological space will be called {\em admissible}.

A necessary condition for the existence of an admissible $\C$-parallel metric is the upper and lower semicontinuity of the cover $\C$.

A family $\C$ of subsets of a topological space $X$ is called
\begin{itemize}
\item {\em lower semicontinuous} if for any open set $U\subset X$ its $\C$-star $St(U;\C):=\bigcup\{C\in\C:C\cap U\ne\emptyset\}$ is open in $X$;
\item {\em upper semicontinuous} if for closed set $F\subset X$ its $\C$-star $St(F;\C)$ is closed in $X$;
\item {\em continuous} if $\C$ is both lower and upper semicontinuous;
\item {\em disjoint} if any distinct sets $A,B\in\C$ are disjoint.
\end{itemize}

The following theorem is the main result of the paper, answering Problem~\ref{prob1}.

\begin{theorem} For a compact cover $\C$ of a metrizable topological space $X$ the following conditions are equivalent:
\begin{enumerate}
\item the topology of $X$ is generated by a $\C$-parallel metric;
\item the family $\C$ is disjoint and continuous.
\end{enumerate}
\end{theorem}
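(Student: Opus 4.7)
The implication $(1)\Ra(2)$ is direct. Disjointness is immediate from the observation (noted in the introduction) that two closed parallel sets are disjoint or coincide. For lower semicontinuity, given $x\in St(U;\C)$ with $x\in C\in\C$ and $a\in C\cap U$, I would pick $\varepsilon>0$ with $B_d(a,\varepsilon)\subset U$ and show $B_d(x,\varepsilon)\subset St(U;\C)$: if $y\in B_d(x,\varepsilon)$ lies in $C'\in\C$, parallelism yields $d(a,C')=d(C,C')\le d(x,y)<\varepsilon$, so compactness of $C'$ produces $a'\in C'$ with $d(a,a')<\varepsilon$, placing $a'\in C'\cap U$. For upper semicontinuity, given a closed $F$ and a sequence $x_n\to x$ with $x_n\in C_n\in\C$ and $y_n\in C_n\cap F$, parallelism gives $d(y_n,C_x)=d(C_n,C_x)\le d(x_n,x)\to 0$; picking closest points $z_n\in C_x$ to $y_n$ (compactness of $C_x$) and extracting a subsequence $z_{n_k}\to z^\ast\in C_x$, I get $y_{n_k}\to z^\ast$, which lies in $F$ by closedness, so $C_x\cap F\ni z^\ast$ and $x\in St(F;\C)$.

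For $(2)\Ra(1)$, the key observation is that the semicontinuity conditions on $\C$ translate exactly into lower and upper semicontinuity of the set-valued fiber map $\Phi\colon X\to K(X)$, $\Phi(x):=C_x$, where $K(X)$ denotes the nonempty compact subsets of $X$ (disjointness makes $\Phi$ well-defined, and the cover property ensures its domain is all of $X$). Because $\Phi$ has compact values and is both lower and upper semicontinuous as a multivalued map, a standard theorem yields continuity of $\Phi$ with respect to the Hausdorff distance induced by any admissible metric on $X$. Armed with this, I would construct the $\C$-parallel metric explicitly: fix a bounded admissible metric $\rho$ on $X$, denote by $H$ the induced Hausdorff metric on $K(X)$, and set
$$d(x,y):=\max\bigl(\rho(x,y),\,H(C_x,C_y)\bigr).$$

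The remaining verifications are routine. Symmetry and nonnegativity are immediate, and the triangle inequality follows from $\max(a_1+a_2,b_1+b_2)\le\max(a_1,b_1)+\max(a_2,b_2)$ combined with the triangle inequalities for $\rho$ and $H$. For admissibility, the bound $d\ge\rho$ shows the $d$-topology refines the $\rho$-topology, while Hausdorff-continuity of $\Phi$ shows $\rho(x_n,x)\to 0$ implies $H(C_{x_n},C_x)\to 0$ and hence $d(x_n,x)\to 0$, giving the reverse refinement. For parallelism, fix $A,B\in\C$ and $a\in A$: the inequality $\rho(a,B)\le\sup_{a'\in A}\rho(a',B)\le H(A,B)$ combined with compactness of $B$ yields some $b^\ast\in B$ with $\rho(a,b^\ast)\le H(A,B)$, so $d(a,b^\ast)=H(A,B)$; since every $d(a,y)\ge H(A,B)$, one concludes $d(a,B)=H(A,B)$, independent of $a\in A$. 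The analogous estimate gives $d(A,b)=H(A,B)=d(A,B)$.

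The main obstacle I anticipate is the technical step upgrading semicontinuity of $\C$ to Hausdorff-continuity of $\Phi$: the inclusion \emph{$C_{x_n}\subset B_\rho(C_x,\varepsilon)$ eventually} follows from upper semicontinuity applied to the open neighborhood $B_\rho(C_x,\varepsilon)$ of $C_x$, but the reverse inclusion \emph{$C_x\subset B_\rho(C_{x_n},\varepsilon)$ eventually} requires covering the compact fiber $C_x$ by finitely many $\varepsilon/2$-balls, applying lower semicontinuity to each, and intersecting. Once this is in hand, the definition $d=\max(\rho,H\circ(\Phi\times\Phi))$ is essentially forced: parallelism demands that $a\mapsto d(a,B)$ be constant on each $A\in\C$ and dominate the evidently variable $a\mapsto\rho(a,B)$, and $H(A,B)$ is the natural uniform upper bound making this simultaneously attainable.
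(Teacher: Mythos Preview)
Your argument for $(1)\Rightarrow(2)$ matches the paper's almost verbatim; the only difference is that you verify upper semicontinuity sequentially rather than by exhibiting an explicit ball disjoint from $\St(F;\C)$, which is harmless in a metrizable space.

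For $(2)\Rightarrow(1)$ you take a genuinely different, and considerably shorter, route. The paper proceeds in the spirit of classical metrization theorems: for each $n$ and each $C\in\C$ it builds a finite open cover $\U_n(C)$ of $C$ with mesh $\le 2^{-n}$ and the ``saturation'' property that any $A\in\C$ meeting one member of $\U_n(C)$ meets them all; it then sets $\delta(x,y)=\inf\{2^{-n}:x,y\in U\in\U_n(C)\text{ for some }C\}$ and passes to the associated chain pseudometric. Parallelism is recovered by an inductive chain-rerouting argument. Your construction bypasses all of this by recognising that disjointness and continuity of $\C$ are exactly lower and upper semicontinuity of the fibre map $\Phi\colon x\mapsto C_x$ into the hyperspace $K(X)$, hence Hausdorff-continuity of $\Phi$; the formula $d(x,y)=\max\bigl(\rho(x,y),H(C_x,C_y)\bigr)$ then gives an admissible metric in which $d(a,B)=H(A,B)$ for every $a\in A$, since $\rho(a,B)\le H(A,B)$ and the infimum over $B$ is attained by compactness. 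This is correct as stated. The trade-off is that the paper's proof is entirely self-contained, whereas yours invokes (or, as you note, must reprove) the standard fact that a compact-valued multifunction which is both l.s.c.\ and u.s.c.\ is continuous for the Hausdorff metric; on the other hand your metric is explicit and the verification of parallelism is a two-line computation rather than an inductive chain argument.
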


\begin{proof} $(1)\Ra(2)$ Assume that $d$ is an admissible $\C$-parallel metric on $X$. The disjointness of the cover $\C$ follows from the obvious observation that two closed parallel sets in a metric space are
either disjoint or coincide.

To see that $\C$ is lower semicontinuous, fix any open set $U\subset X$ and consider its $\C$-star $\St(U;\C)$. To see that $\St(U;\C)$ is open, take any point $s\in \St(U;\C)$ and find a set $C\in\C$ such that $s\in C$ and $C\cap U\ne\emptyset$. Fix a point $u\in U\cap C$ and find $\e>0$ such that the $\e$-ball $B(u,\e)=\{x\in X:d(x,u)<\e\}$ is contained in $U$. We claim that $B(s,\e)\subset \St(U;\C)$.
Indeed, for any $x\in B(s,\e)$ we can find a set $C_x\in\C$ containing $x$ and conclude that $d(C_x,u)=d(C_x,C)\le d(x,s)<\e$ and hence $C_x\cap U\ne\emptyset$ and $x\in C_x\subset\St(U,\C)$.

To see that $\C$ is lower semicontinuous, fix any closed set $F\subset X$ and  consider its $\C$-star $\St(F;\C)$. To see that $\St(F;\C)$ is closed, take any point $s\in X\setminus \St(F;\C)$ and find a set $C\in\C$ such that $s\in C$. It follows from $s\notin \St(F,\C)$ that $C\cap F=\emptyset$ and hence $\e:=d(C,F)>0$ by the compactness of $C$. We claim that $B(s,\e)\cap\St(F,\C)=\emptyset$. Assuming the opposite, we can find a point $x\in B(s,\e)\cap St(F,\C)$ and a set $C_x\in\C$ such that $x\in C_x$ and $C_x\cap F\ne\emptyset$. Fix a point $z\in C_x\cap F$ and observe that $d(C,F)\le d(C,z)=d(C,C_x)\le d(s,x)<\e=d(C,F)$, which is a desired contradiction. 
\smallskip

The proof of the implication $(2)\Ra(1)$ is more difficult. Assume that $\C$ is disjoint and continuous. Fix any admissible metric $\rho\le 1$ on $X$.

Let $\U_0(C)=\{X\}$ for every $C\in\C$.

\begin{claim} For every $n\in\IN$ and every $C\in\C$ there exists a finite cover $\U_n(C)$ of $C$ by open subsets of $X$ such that
\begin{enumerate}
\item[(i)] each set $U\in\U_n(C)$ has $\rho$-diameter $\le\frac1{2^n}$;
\item[(ii)] if a set $A\in\C$ meets some set $U\in\U_n(C)$, then $A\subset\bigcup\U_n(C)$ and $A$ meets each set $U'\in\U_n(C)$.
\end{enumerate}
\end{claim}

\begin{proof} Using the paracompactness  \cite[5.1.3]{En} of the metrizable space $X$, choose an open locally finite cover $\V$ of $X$ consisting of sets of $\rho$-diameter $<\frac1{2^n}$.

For every compact set $C\in\C$ consider the finite subfamily $\V(C):=\{V\in\V:V\cap C\ne \emptyset\}$ of the locally finite cover $\V$. Since the cover $\C$ is upper semicontinuous, the set $F_C=\St(X\setminus \bigcup\V(C);\C)$ is closed and disjoint with the set $C$. Since $\C$ is lower semi-continuous, for any open set $V\in\V(C)$ the set $\St(V;\C)$ is open and hence  $W(C):=\bigcap_{V\in\V(C)}\St(V;\C)\setminus F_C$ is an open neighborhood of $C$.

Put $\U_n(C):=\{W(C)\cap V:V\in\V(C)\}$ and observe that $\U_n$ satisfies the condition (i).

Let us show that the cover $\U_n(C)$ satisfies the condition (ii). Assume that a set $A\in\C$ meets some set $U\in\U_n(C)$. First we show that $A\subset \bigcup\U_n(C)$. Find a set $V\in\V(C)$ such that $U=W(C)\cap V$. It follows that $\emptyset\ne A\cap U\subset A\cap W(C)$ that the set $A$ meets $W(C)$ and hence is contained in $W(C)$ and is disjoint with $F_C$. Hence $$A\subset W(C)\cap\big({\textstyle{\bigcup}\,\V(C)}\big)=\bigcup\limits_{V\in\V(C)}W(C)\cap V={\textstyle{\bigcup}\,\U_n(C)}.$$

Next, take any set $U'\in\U_n(C)$ and find a set $V'\in\V(C)$ with $U'=W(C)\cap V'$. The relation $A\cap W(C)\cap V=A\cap U\ne\emptyset$ and the definition of the set $W(C)\supset A$ implies that $A$ intersects the set $V'\in\V(C)$ and hence intersects the set $U'=W(C)\cap V'$. This completes the proof of Claim.
\end{proof}

Given two points $x,y\in X$ let $$\delta(x,y):=\inf\big\{\tfrac1{2^n}:\mbox{$\exists C\in\C$ and $U\in\U_n(C)$ such that $(x,y)\in U$}\big\}.$$
Adjust the function $\delta$ to a pseudometric $d$ letting $$d(x,y)=\inf\sum_{i=1}^m\delta(x_{i-1},x_i)$$where the infimum is taken over all sequences $x=x_0,\dots,x_m=y$.
The condition (i) of Claim implies that $\rho(x,y)\le\delta(x,y)$ and hence $\rho(x,y)\le d(x,y)$ for any $x,y\in X$. So, the pseudometric $d$ is a metric on $X$ such that the identity map $(X,d)\to (X,\rho)$ is continuous. To see that this map is a homeomorphism, take any point $x\in X$ and $\e>0$. Find $n\in\IN$ such that $\frac1{2^n}<\e$ and choose a set $C\in\C$ with $x\in C$ and a set $U\in\U_n(C)$ with $x\in U$. Then for any $y\in U$ we get $d(y,x)\le\delta(x,y)\le\frac1{2^n}<\e$, which means that the map $X\to (X,d)$ is continuous.

Finally, let us prove that the metric $d$ is $\C$-parallel. Pick any two distinct compact sets $A,B\in\C$. We need to show that $d(a,B)=d(A,B)=d(A,b)$ for any $a\in A$, $b\in B$. Assuming that this inequality is not true, we conclude that either $d(a,B)>d(A,B)>0$ or $d(A,b)>d(A,B)>0$ for some $a\in A$ and $b\in B$.

First assume that $d(a,B)>d(A,B)$ for some $a\in A$. Choose points $a'\in A$, $b'\in B'$ such that $d(a',b')=d(A,B)<d(a,B)$. By the definition of the distance $d(a',b')<d(a,B)$, there exists a chain $a'=x'_0,x'_1,\dots,x'_m=b'$ such that $\sum_{i=1}^m\delta(x'_{i-1},x'_i)<d(a,B)$. We can assume that the points $x'_0,\dots,x'_m$ are pairwise distinct, so for every $i\le m$ there exist $n_i\ge 0$ such that $\delta(x'_{i-1},x'_i)=\frac1{2^{n_i}}$ and hence  $x'_{i-1},x'_i\in U_i'$ for some $C_i\in\C$ and $U_i'\in\U_{n_i}(C_i)$. For every $i\le m$ let $A_i\in\C$ be the unique set with $x_i'\in A_i$. Then $A_0=A$ and $A_m=B$.

Using the condition (ii), we can inductively construct a sequence of points $a=x_0,x_1,\dots,x_m\in B$ such that for every positive $i\le m$ the point $x_i$ belongs to $A_i$ and the points $x_{i-1},x_i$ belong to some set $U_i\in\U_{n_i}(C_i)$. The chain $a=x_0,x_1,\dots,x_m\in A_m=B$ witnesses that $$d(a,B)\le  d(a,x_m)\le\sum_{i=1}^m\delta(x_{i-1},x_i)\le\sum_{i=1}^m\tfrac1{2^{n_i}}=
\sum_{i=1}^m\delta(x'_{i-1},x_i')<d(a,B),$$
which is a desired contradiction.

By analogy we can prove that the case $d(A,B)<d(A,b)$ leads to a contradiction.

\end{proof}

\end{document}